\documentclass{amsart}
\usepackage[dvips]{graphicx}
\usepackage{url}

%Packages included in my thesis
\usepackage{amsmath}
\usepackage{amsthm}
\usepackage{amssymb}
\usepackage{amsfonts}
\usepackage{mathrsfs}
\usepackage[all]{xy}
\usepackage{wrapfig}
\usepackage{graphicx}
\usepackage{float}
\usepackage{url}
\usepackage{subfig}

%Theorem enviroments
\newtheorem{theorem}{Theorem}[section]
\newtheorem{lemma}[theorem]{Lemma}

\newtheorem{proposition}[theorem]{Proposition}
\theoremstyle{definition}

\newtheorem{remark} [theorem] {Remark}

%Macros included in my thesis. Doubt we will need all of them.

\newcommand{\vf}{\varphi}
\newcommand{\vp}{\varpi}

\newcommand{\G}{\Gamma}
\newcommand\res[1]{{\lower1pt\hbox{$|$}}_{\raise.5pt\hbox{${\scriptstyle #1}$}}}

\numberwithin{equation}{section}

\begin{document}

\title{A Wallis Product on Clovers}

\author{Trevor Hyde}
\address{Department of Mathematics, Amherst
College, Amherst, MA 01002-5000, USA}
\email{thyde641@gmail.com}

\keywords{lemniscate, Wallis product, Pi}

\date{\today}

\maketitle

\section{Introduction}
The Wallis product is a well-known infinite product expression for $\pi$ by rational factors derived by John Wallis in his 1655 treatise \cite{Wallis}. 
\begin{align}
\label{wallis}
	\pi &= 4\cdot \frac{2\cdot 4}{3\cdot 3}\cdot \frac{4\cdot 6}{5\cdot 5}\cdot \frac{6\cdot 8}{7\cdot 7}\cdot \frac{8\cdot 10}{9\cdot 9}\cdots \notag\\
	&= 4\prod_{n=1}^\infty{\frac{2n(2n+2)}{(2n+1)^2}}.
\end{align}
\indent One proof of \eqref{wallis} proceeds by considering the sequence of definite integrals
\[
	I(n) = \int_0^\pi{\sin(x)^n\,dx},
\]
and computing the limit of $I(n+1)/I(n)$ in two different ways. See \cite{stirling} for an excellent exposition of this argument. Let $\{\vp_m\}$ be the sequence of real numbers defined by
\begin{equation}
\label{vpdef}
	\vp_m = 2\int_0^1{\frac{dt}{\sqrt{1-t^m}}}.
\end{equation}
Observe that $\vp_2 = \pi$. The goal of this paper is to prove the generalized Wallis product formula
\begin{equation}
	\vp_m = \frac{2(m+2)}{m}\prod_{n=1}^\infty{\frac{2n(2mn+ m +2)}{(2n+1)(2mn+2)}}.
\end{equation}
In Section \ref{sec:clover} we recall the theory of \emph{clover curves} introduced in \cite{clover} where we realize $\vp_m$ as an arc length on the \emph{$m$-clover}. Our proof  generalizes the definite integral approach by considering the sequence of definite integrals
\[
	I_m(n) = \int_0^{\vp_m}{\vf_m(x)^n\,dx},
\]
where $\vf_m(x)$ is the \emph{$m$-clover function} to be defined in Section \ref{sec:clover}, and computing the limit of $I_m(n+1)/I_m(n)$ in two different ways.

\section{Clovers}
\label{sec:clover}

For natural $m$ we define the \emph{$m$-clover} to be the locus of the polar equation
\[
	r^{m/2} = \cos(\tfrac{m}{2}\theta).
\]
Examples of $m$-clovers are displayed in Figure 1 for small $m$. The $m$-clover has $m$ \emph{leaves} for $m$ odd and $\frac{m}{2}$ leaves for $m$ even. The \emph{principal leaf} is defined as the points on the $m$-clover satisfying $(r,\theta)\in [0,1]\times [-\tfrac{\pi}{m},\tfrac{\pi}{m}]$.
\begin{figure}[H]
\label{fig1}
	\centering
	\subfloat[Cardioid]{
		\includegraphics[scale=.35]{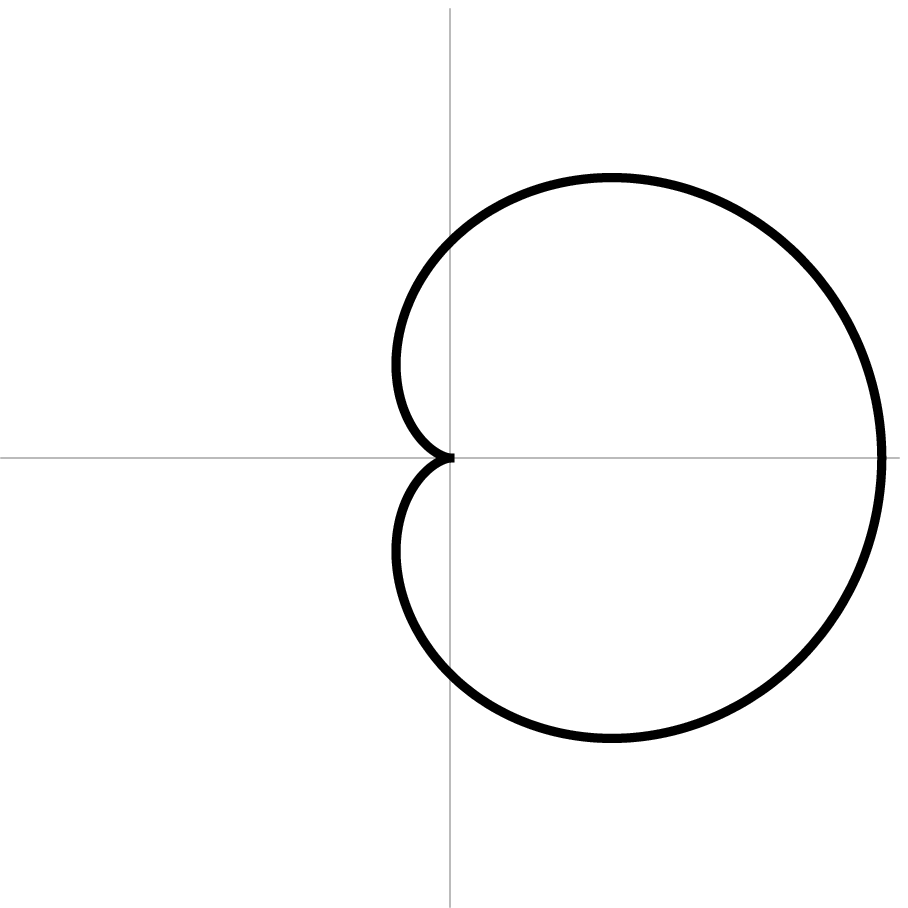}
	}
	\subfloat[Circle]{
		\includegraphics[scale=.35]{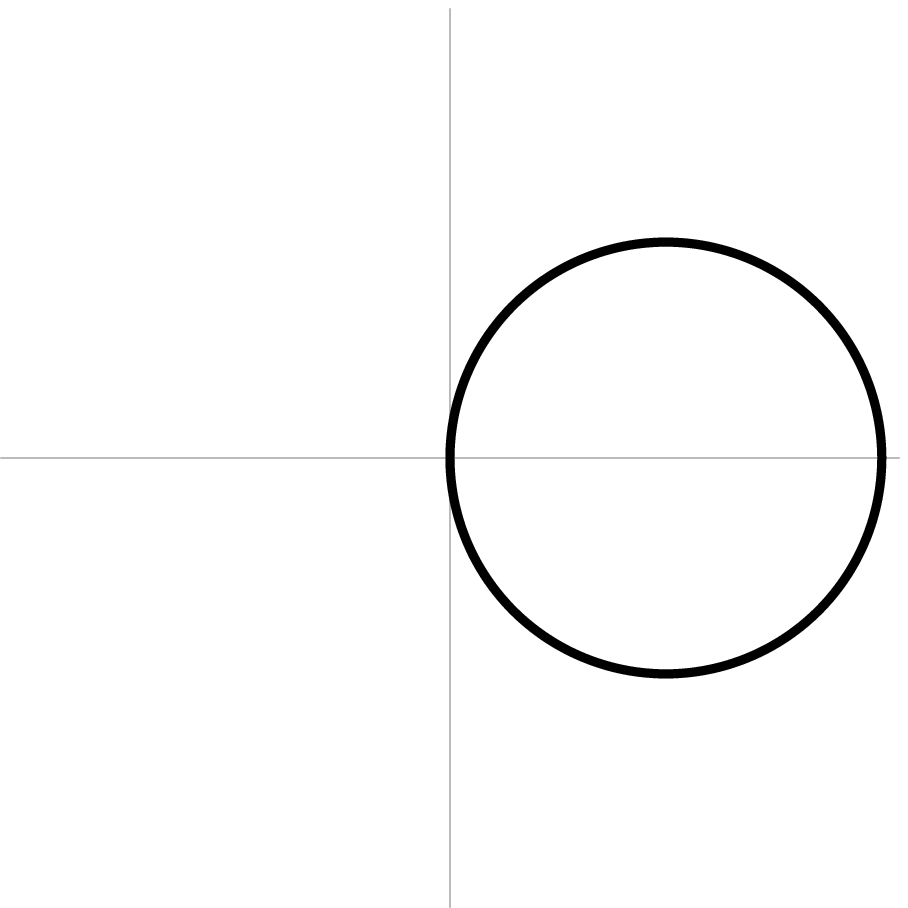}
	}
	\subfloat[Clover]{
		\includegraphics[scale=.35]{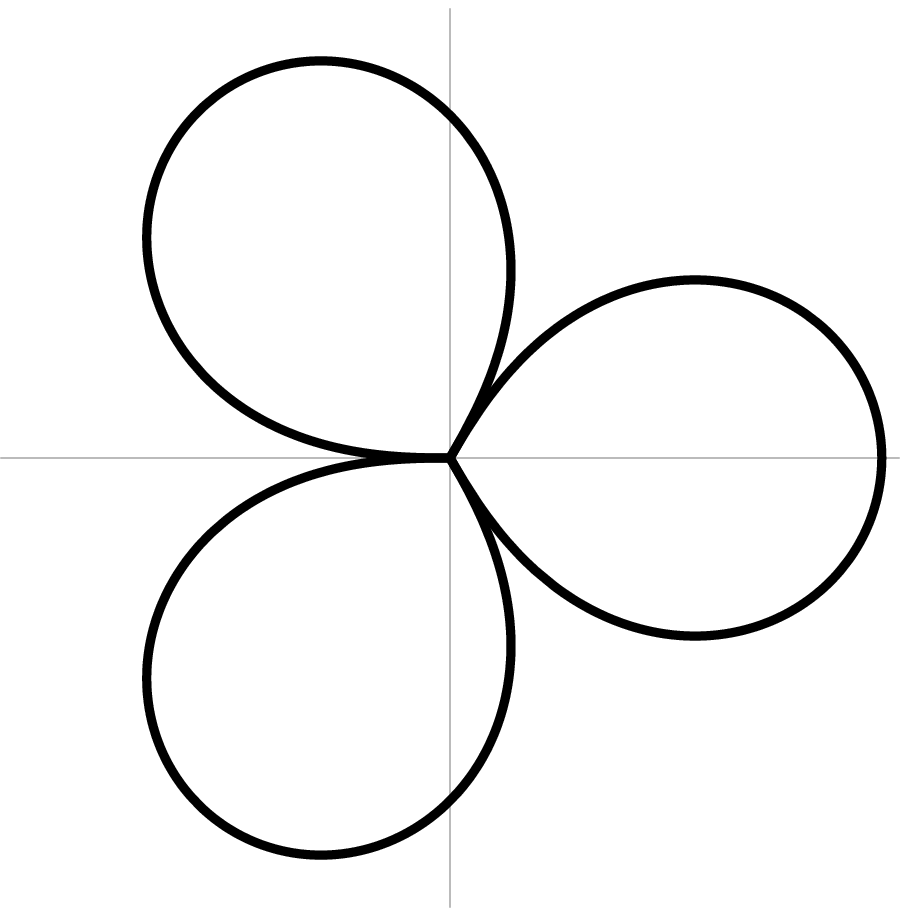}
	}
	\subfloat[Lemniscate]{
		\includegraphics[scale=.35]{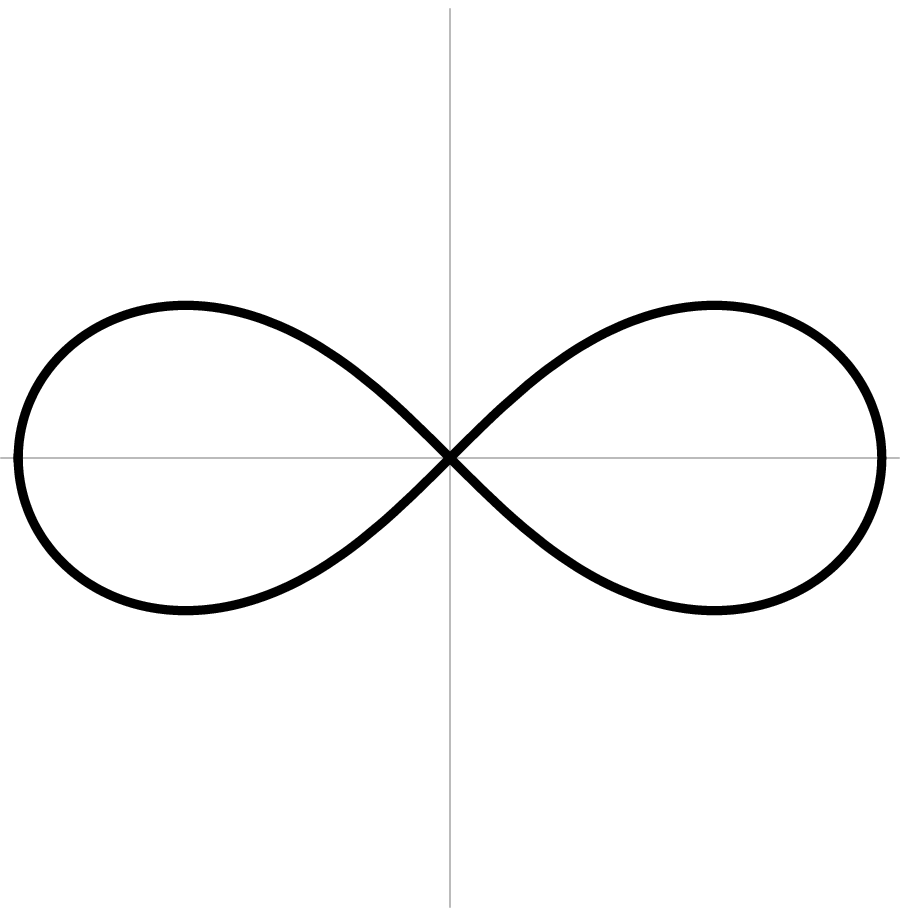}
	}
	\caption{$m$-clovers for $m=1,2,3,4$}
\end{figure}

Consider the polar arc length integral for a segment of the principal leaf in the upper-half plane beginning at the origin and terminating at the unique point with radial component $r\in [0,1]$,
\begin{equation}
\label{length}
	l_m(r) = \int_0^r{\frac{dt}{\sqrt{1-t^m}}}.\footnote{See \cite[Prop. 1]{clover} for a derivation of \eqref{length} from the definition of an arc length integral in polar coordinates.}
\end{equation}
Hence, the constant defined in \eqref{vpdef}
\begin{equation*}
	\vp_m = 2\int_0^1{\frac{dt}{\sqrt{1-t^m}}} = 2l_m(1)
\end{equation*}
denotes the arc length of the $m$-clover's principal leaf. The integrals \eqref{length} almost never have closed forms in elementary functions. One exception is the case $m=2$ where we have
\begin{equation}
	l_2(r) = \int_0^2{\frac{dt}{\sqrt{1-t^2}}} = \sin^{-1}(r).
\end{equation}
Thus, $l_2^{-1}(x) = \sin(x)$ is a natural function to consider. Motivated by this example we define the \emph{$m$-clover function} $\vf_m(x)$ for $x\in [0,\tfrac{1}{2}\vp_m]$ by
\begin{equation*}
	\vf_m(x) = r\mbox{ if }l_m(r) = x.
\end{equation*}
That is, define $\vf_m$ as the inverse of the arc length integral \eqref{length}.

\begin{figure}[H]
\centering
\includegraphics[scale=.7]{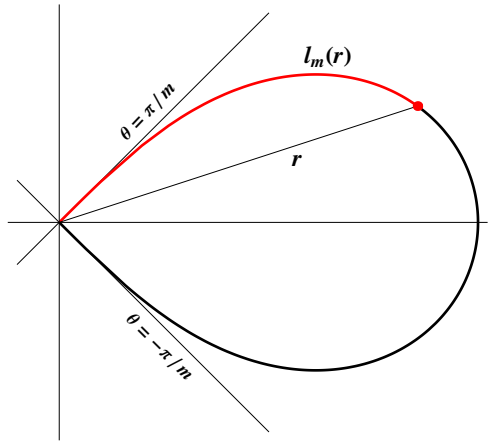}
\caption{Principal leaf of the $m$-clover.}
\end{figure}

We extend the domain of $\vf_m$ to $[0,\vp_m]$ via the functional equation
\[
	\vf_m(\vp_m - x) = \vf_m(x).
\]
Using one-sided derivatives at $0$ and $\vp_m$, $\vf_m(x)$ is differentiable on $[0,\vp_m]$ by the fundamental theorem of calculus. Geometrically, given $x\in[0,\vp_m]$, there is a unique point $\mathcal{P}_x = (r,\theta)$ at an arc distance $x$ along the principal leaf; $\vf_m(x) = r$ denotes the radial component of the point $\mathcal{P}_x$. In our proof of the generalized Wallis product, the function $\vf_m(x)$ plays the role of $\sin(x)$ in the proof of the original product in the sense that we shall consider the sequence of integrals
\begin{equation*}
	I_m(n) = \int_0^{\vp_m}{\vf_m(x)^n\,dx}.
\end{equation*}

\begin{remark} 
\label{remark}
For $m=2$, the $m$-clover is the circle $r=\cos(\theta)$ with $\vf_2(x) = \sin(x)$ and $\vp_2 = \pi$. The symbol $\vp$ is a variant on the Greek letter $\pi$. Thus, our use of this notation reflects the sense in which $\vp_m$ is a generalization of $\pi$. When $m=4$, the $m$-clover is the lemniscate $r^2 = \cos(2\theta)$ and  $\vf_m(x)$ is Abel's lemniscate function. The number 
\[
	\vp_4 = 2.6220575549\ldots
\]
is known as the \emph{lemniscate constant}. More information on the lemniscate, including its connections to number theory, may be found in \cite{agm}, \cite[Chp. 15]{galois}, and \cite{constants}.
\end{remark}

\begin{proposition}
\label{diffEq}
Let $\vf_m(x)$ be the $m$-clover function. Then for all $x\in [0,\vp_m]$
\begin{enumerate}
	\item $\vf_m(x)^m + \vf_m'(x)^2 = 1$, \text{ and}\\
	\item $\vf_m(x)^{m-1} = -\tfrac{2}{m}\vf_m''(x)$.
\end{enumerate}
\end{proposition}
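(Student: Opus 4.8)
The plan is to read both identities off the defining relation $l_m(\vf_m(x)) = x$, obtaining (1) by a single differentiation and (2) by differentiating (1) once more. I would first work on the half-interval $[0,\tfrac{1}{2}\vp_m]$, where $\vf_m$ is by construction the compositional inverse of $l_m$. Differentiating $l_m(\vf_m(x)) = x$ by the chain rule gives $l_m'(\vf_m(x))\,\vf_m'(x) = 1$, while the fundamental theorem of calculus applied to \eqref{length} yields $l_m'(r) = (1-r^m)^{-1/2}$. Since $\vf_m$ is increasing on this interval, its derivative is nonnegative, so substituting gives
\[
	\vf_m'(x) = \sqrt{1-\vf_m(x)^m}.
\]
Squaring and rearranging produces (1) on $[0,\tfrac{1}{2}\vp_m]$.

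To extend (1) to all of $[0,\vp_m]$ I would invoke the functional equation $\vf_m(\vp_m - x) = \vf_m(x)$. For $x \in [\tfrac{1}{2}\vp_m, \vp_m]$ write $y = \vp_m - x \in [0,\tfrac{1}{2}\vp_m]$; the chain rule gives $\vf_m'(x) = -\vf_m'(y)$, so that $\vf_m'(x)^2 = \vf_m'(y)^2$ and $\vf_m(x)^m = \vf_m(y)^m$, and the identity at $y$ transfers to $x$. The join at $\tfrac{1}{2}\vp_m$ and the two endpoints are covered by the one-sided differentiability already noted in the text.

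For (2) I would differentiate the identity in (1). This produces $m\,\vf_m(x)^{m-1}\vf_m'(x) + 2\vf_m'(x)\vf_m''(x) = 0$, which factors as
\[
	\vf_m'(x)\big(m\,\vf_m(x)^{m-1} + 2\vf_m''(x)\big) = 0.
\]
Wherever $\vf_m'(x)\neq 0$ one cancels the factor and rearranges to obtain (2). The main obstacle is precisely the set where $\vf_m'(x) = 0$, since there the cancellation is not justified. Because $\vf_m' = \pm\sqrt{1-\vf_m^m}$ on $[0,\vp_m]$, this derivative vanishes only at the single point $x = \tfrac{1}{2}\vp_m$, where $\vf_m = 1$. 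To handle it I would note that differentiating $\vf_m' = \sqrt{1-\vf_m^m}$ directly on the interior gives $\vf_m''(x) = -\tfrac{m}{2}\vf_m(x)^{m-1}$, an expression extending continuously to $\tfrac{1}{2}\vp_m$; as both sides of (2) are then continuous and agree off a single point, they agree everywhere by continuity.
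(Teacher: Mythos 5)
Your proof is correct and follows essentially the same route as the paper: differentiate the defining relation $l_m(\vf_m(x))=x$ to get (1), then differentiate (1) and cancel $\vf_m'$ to get (2). In fact you are more careful than the paper at the two points it glosses over --- the paper asserts the integral identity $x=\int_0^{\vf_m(x)}(1-t^m)^{-1/2}\,dt$ for all $x\in[0,\vp_m]$ even though it only holds on $[0,\tfrac12\vp_m]$ (your transfer via the functional equation fixes this), and it silently divides by $\vf_m'(x)$, which vanishes at $x=\tfrac12\vp_m$ (your continuity argument fixes that).
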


\begin{proof}
	From the definition of $\vf_m(x)$ we have
	\begin{equation}
	\label{intIdentity}
		x = \int_0^{\vf_m(x)}{\frac{dt}{\sqrt{1-t^m}}},
	\end{equation}
	valid for all $x\in[0,\vp_m]$. Implicitly differentiating \eqref{intIdentity} gives
	\begin{align}
	\label{diff1}
		\frac{\vf_m'(x)}{\sqrt{1-\vf_m(x)^m}} &= 1\notag\\
		\vf_m'(x)^2 &= 1-\vf_m(x)^m .
	\end{align}
	Rearranging \eqref{diff1} results in (1),
	\begin{align}
	\label{diff2}
		\vf_m(x)^m + \vf_m'(x)^2 = 1.
	\end{align}
	Differentiating \eqref{diff2} yields (2)
	\begin{align}
		m\vf_m(x)^{m-1}\vf_m'(x) + 2\vf_m'(x)\vf_m''(x) &= 0 \notag\\
		\vf_m(x)^{m-1} &= - \tfrac{2}{m}\vf_m''(x).\qedhere
	\end{align}
\end{proof}

Proposition \ref{diffEq} (1) should be viewed as the $m$-clover version of the Pythagorean identity
\begin{equation}
\label{pyth}
	\sin(x)^2 + \cos(x)^2 = 1.
\end{equation}
In light of Remark \ref{remark}, \eqref{pyth} corresponds to the special case $m=2$. Analogies between $\vf_2(x) = \sin(x)$ and $\vf_4(x)$ have been explored since the introduction of $\vf_4(x)$ by Gauss and Abel. Abel's investigations produced his famous result characterizing the division points of the lemniscate constructible by ruler and compass in parallel with Gauss' characterization on the circle. Cox and Shurman extend the constructibility results to the $3$-clover in \cite{clover} where they introduce the $m$-clover theory outlined above.

\section{A Sequence of Integrals}

For a natural number $m$ and an integer $n\geq 0$, we define
\begin{align}
\label{sequence}
	I_m(n) = \int_0^{\vp_m}{\vf_m(x)^n\,dx}.
\end{align}
Each integral is finite and positive. Note that when $m=2$, $\{I_2(n)\}$ reduces to the sequence of definite integrals stated in the introduction. Our first task is to establish a recursive relation among the elements of $\{I_m(n)\}$.

\begin{lemma}
\label{recurLemma}
For all natural $m$ and integral $n\geq 0$,
\begin{align}
\label{intRecur}
	\frac{I_m(n+m)}{I_m(n)} = \frac{2(n+1)}{2(n+1)+ m}.
\end{align}
\end{lemma}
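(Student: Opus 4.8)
The plan is to establish the recursion by integration by parts, exactly mirroring the classical argument for $\int_0^\pi \sin^n x\,dx$, but using the clover differential equations from Proposition \ref{diffEq} in place of the trigonometric identities. The key observation is that Proposition \ref{diffEq}(2) lets me write a power $\vf_m(x)^{m-1}$ as a second derivative, which is precisely the mechanism that reduces the exponent by $m$ in each integration by parts.

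Concretely, I would start from $I_m(n+m) = \int_0^{\vp_m} \vf_m(x)^{n+m}\,dx$ and use Proposition \ref{diffEq}(1) to substitute $\vf_m(x)^m = 1 - \vf_m'(x)^2$, giving
\[
	I_m(n+m) = \int_0^{\vp_m}{\vf_m(x)^n\bigl(1-\vf_m'(x)^2\bigr)\,dx} = I_m(n) - \int_0^{\vp_m}{\vf_m(x)^n\,\vf_m'(x)^2\,dx}.
\]
The remaining task is to evaluate $J := \int_0^{\vp_m} \vf_m(x)^n \vf_m'(x)^2\,dx$ in terms of $I_m(n+m)$ so that the equation can be solved. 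I would integrate $J$ by parts, taking $u = \vf_m'(x)$ and $dv = \vf_m(x)^n \vf_m'(x)\,dx$, so that $v = \tfrac{1}{n+1}\vf_m(x)^{n+1}$ and $du = \vf_m''(x)\,dx = -\tfrac{m}{2}\vf_m(x)^{m-1}\,dx$ by Proposition \ref{diffEq}(2). This converts $J$ into a boundary term plus a multiple of $\int_0^{\vp_m}\vf_m(x)^{n+m}\,dx = I_m(n+m)$.

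The main point requiring care is the boundary term $\bigl[\tfrac{1}{n+1}\vf_m(x)^{n+1}\vf_m'(x)\bigr]_0^{\vp_m}$. I expect this to vanish: at $x=0$ we have $\vf_m(0)=0$ (since $l_m(0)=0$), which kills the factor $\vf_m(x)^{n+1}$ for $n\geq 0$, while at $x=\vp_m$ the functional equation $\vf_m(\vp_m - x) = \vf_m(x)$ forces $\vf_m'(\vp_m) = 0$, killing the other factor. Verifying these endpoint values rigorously—especially that the one-sided derivative at $\vp_m$ vanishes and that the parts formula is valid given $\vf_m$ is only known to be differentiable via one-sided derivatives at the endpoints—is the step I would treat most carefully, though it is not conceptually hard.

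Once the boundary term is discarded, I obtain $J = \tfrac{m}{2(n+1)}I_m(n+m)$, so that
\[
	I_m(n+m) = I_m(n) - \frac{m}{2(n+1)}I_m(n+m).
\]
Collecting the $I_m(n+m)$ terms gives $\bigl(1 + \tfrac{m}{2(n+1)}\bigr)I_m(n+m) = I_m(n)$, and solving yields
\[
	\frac{I_m(n+m)}{I_m(n)} = \frac{1}{1 + \tfrac{m}{2(n+1)}} = \frac{2(n+1)}{2(n+1)+m},
\]
which is the claimed identity. The whole argument is thus a direct transcription of the Wallis recursion, with the two parts of Proposition \ref{diffEq} playing the roles of the Pythagorean identity and the relation $\tfrac{d}{dx}\cos x = -\sin x$ respectively; the only genuine obstacle is justifying the vanishing boundary term at the non-smooth endpoint $\vp_m$.
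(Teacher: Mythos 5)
Your argument is essentially the paper's proof run in the opposite order: the paper integrates $\int_0^{\vp_m}{\vf_m(x)^{n+1}\cdot\vf_m(x)^{m-1}\,dx}$ by parts first (taking $u=\vf_m(x)^{n+1}$ and $v=-\tfrac{2}{m}\vf_m'(x)$ via Proposition \ref{diffEq}(2)) and only then applies the Pythagorean identity to the resulting $\int{\vf_m(x)^n\vf_m'(x)^2\,dx}$, whereas you apply the Pythagorean identity first and then perform the same integration by parts, with the roles of $u$ and $v$ swapped, on the leftover integral $J$. The boundary term is the same expression $\vf_m(x)^{n+1}\vf_m'(x)$ up to a constant factor, and the closing algebra is identical, so this is the same proof.

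That said, the one step you singled out as requiring care is the one you justify incorrectly. Differentiating the functional equation $\vf_m(\vp_m-x)=\vf_m(x)$ gives $\vf_m'(\vp_m-x)=-\vf_m'(x)$, hence $\vf_m'(\vp_m)=-\vf_m'(0)=-1$, not $0$; the derivative vanishes at the midpoint $\tfrac{1}{2}\vp_m$, not at the endpoint (see Table 1 of the paper). The boundary term nevertheless vanishes at $x=\vp_m$, but for the same reason as at $x=0$: the functional equation gives $\vf_m(\vp_m)=\vf_m(0)=0$, and since $n+1\geq 1$ the factor $\vf_m(x)^{n+1}$ kills the term at both endpoints. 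With that correction your argument goes through.
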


\begin{proof}
	Our strategy is to transform $I_m(n+m)$ with integration by parts. Let 
	\begin{align*}
	u &= \vf_m(x)^{n+1}\\
	dv &= \vf_m(x)^{m-1}\,dx,
	\end{align*}
	then Proposition \ref{diffEq}(2) implies
	\begin{align*}
	\hspace{.58in}du &= (n+1)\vf_m(x)^n\vf_m'(x)\,dx\\
	v &= -\tfrac{2}{m}\vf_m'(x).
	\end{align*}
	Thus,
	\begin{align}
	\label{intEq}
		I_m(n+m) &= \int_0^{\vp_m}{\vf_m(x)^{n+m}\,dx} \notag\\
		&= \left.-\tfrac{2}{m}\vf_m(x)^{n+1}\vf_m'(x)\right|_0^{\vp_m} + \tfrac{2(n+1)}{m}\int_0^{\vp_m}{\vf_m(x)^n\vf_m'(x)^2\,dx}\notag \\
		&= \tfrac{2(n+1)}{m}\int_0^{\vp_m}{\vf_m(x)^n\vf_m'(x)^2\,dx}\hspace{.64in}\text{Table 1} \notag \\
		&= \tfrac{2(n+1)}{m}\int_0^{\vp_m}{\vf_m(x)^n(1-\vf_m(x)^m)\,dx}\hspace{.25in}\text{Prop. } \ref{diffEq}(1)\notag \\
		&= \tfrac{2(n+1)}{m}\left(\int_0^{\vp_m}{\vf_m(x)^n\,dx} - \int_0^{\vp_m}{\vf_m(x)^{n+m}\,dx}\right)\notag \\
		&= \tfrac{2(n+1)}{m}(I_m(n) - I_m(n+m))
	\end{align}
	Rearranging \eqref{intEq} leads to \eqref{intRecur},
	\[
		\frac{I_m(n+m)}{I_m(n)} = \frac{2(n+1)}{2(n+1)+m}.\qedhere
	\]
\end{proof}

\begin{table}[t]
\label{table1}
	\centering
	\begin{tabular}{|c|c|c|}
		\hline
		$x$ & $\vf_m(x)$\rule{0pt}{11pt} & $\vf_m'(x)$ \\[2pt]
		\hline 
		$0$ & $0$\rule{0pt}{11pt} & $1$\\[2pt]                 
		\hline
		$\tfrac{1}{2}\vp_m$\rule{0pt}{11pt} & $1$ & $0$\\[2pt]
		\hline
		$\hspace{6pt}\vp_m$ & $0$\rule{0pt}{11pt} & $-1\hspace{8pt}$\\[2pt]
		\hline
	\end{tabular}
\caption{Important Values of $\vf_m$ and $\vf_m'$.}
\end{table}

Our next lemma demonstrates that $\{I_m(n)\}$ is a decreasing sequence.

\begin{lemma}
\label{orderLemma}
For any natural number $m$, if $n_1\geq n_2\geq 0$ then $I_m(n_1) \leq I_m(n_2)$.
\end{lemma}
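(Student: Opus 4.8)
The plan is to prove monotonicity pointwise on the integrand and then integrate.

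The key observation is that for every $x \in [0,\vp_m]$, the value $\vf_m(x) = r$ lies in the interval $[0,1]$, since $r$ ranges over the radial component of points on the principal leaf. This is the crucial fact, and it follows directly from the geometric description: $\mathcal{P}_x = (r,\theta)$ with $r \in [0,1]$, as recorded in Table 1 where $\vf_m$ attains its maximum value $1$ at $x = \tfrac{1}{2}\vp_m$ and equals $0$ at the endpoints.

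\begin{proof}
By the definition of $\vf_m(x)$ as the radial component of a point on the principal leaf, we have $0 \leq \vf_m(x) \leq 1$ for all $x \in [0,\vp_m]$. Suppose $n_1 \geq n_2 \geq 0$. Then for any base $t$ with $0 \leq t \leq 1$, the inequality $t^{n_1} \leq t^{n_2}$ holds, since raising a number in $[0,1]$ to a larger power does not increase it. Applying this with $t = \vf_m(x)$ yields
\[
	\vf_m(x)^{n_1} \leq \vf_m(x)^{n_2}
\]
for every $x \in [0,\vp_m]$. Integrating both sides over $[0,\vp_m]$ and using the monotonicity of the integral gives
\[
	I_m(n_1) = \int_0^{\vp_m}{\vf_m(x)^{n_1}\,dx} \leq \int_0^{\vp_m}{\vf_m(x)^{n_2}\,dx} = I_m(n_2).\qedhere
\]
\end{proof}

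The only genuine content here is the bound $\vf_m(x) \in [0,1]$; everything else is a one-line pointwise comparison followed by monotonicity of integration. I expect no real obstacle, since the range of $\vf_m$ is immediate from its geometric meaning and is already reflected in Table 1. The one point worth being careful about is the boundary behavior at $t = 0$ and $t = 1$: at $t = 0$ the inequality $t^{n_1} \leq t^{n_2}$ holds trivially (both sides vanish, with the convention $0^0 = 1$ causing no trouble since that case would require $n_1 = n_2 = 0$), and at $t = 1$ both sides equal $1$, so the comparison is valid across the entire closed interval.
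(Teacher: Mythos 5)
Your proof is correct and follows exactly the paper's argument: observe that $\vf_m(x)\in[0,1]$ on $[0,\vp_m]$, deduce the pointwise inequality $\vf_m(x)^{n_1}\leq\vf_m(x)^{n_2}$, and integrate. The extra care you take with the boundary cases $t=0$ and $t=1$ is fine but not needed beyond what the paper states.
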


\begin{proof}
	For $x\in[0,\vp_m]$ we have $\vf_m(x)\in[0,1]$. Therefore, $n_1\geq n_2\geq 0$ implies that $\vf_m(x)^{n_1}\leq \vf_m(x)^{n_2}$, and consequently $I_m(n_1) \leq I_m(n_2)$.
\end{proof}

It follows immediately from Lemma \ref{orderLemma} that $I_m(n) \geq I_m(n+1) \geq I_m(n+m)$. Dividing through by $I_m(n)$,
\[
	1 \geq \frac{I_m(n+1)}{I_m(n)} \geq \frac{I_m(n+m)}{I_m(n)} \stackrel{\eqref{intRecur}}{=} \frac{2(n+1)}{2(n+1)+m}.
\]
As $n$ tends toward infinity, the squeeze theorem implies

\begin{equation}
\label{limitEq}
	\lim_{n\rightarrow\infty}{\frac{I_m(n)}{I_m(n+1)}}=1.
\end{equation}

Next we compute the limit \eqref{limitEq} in another way using initial values of $I_m(n)$ and the recurrence \eqref{intRecur}. When $n=0$,
\[
	I_m(0) = \int_0^{\vp_m}{1\,dx} = \vp_m.
\]
Hence $I_m(m) = \tfrac{2}{m+2}I_m(0) = \tfrac{2}{m+2}\vp_m$. When $n=m-1$, we use Proposition \ref{diffEq}(2) and Table 1 to compute 
\begin{align*}
	I_m(m-1) &= \int_0^{\vp_m}{\vf_m(x)^{m-1}\,dx}\\
	&= -\tfrac{2}{m}\int_0^{\vp_m}{\vf_m''(x)\,dx}\\
	&= \left.-\tfrac{2}{m}\vf_m'(x)\right|_0^{\vp_m}\\
	&= \tfrac{4}{m}.
\end{align*}

\begin{table}[h]
\label{table2}
	\centering
	\begin{tabular}{|c|c|c|c|}
		\hline
		$n$ & $0$\rule{0pt}{11pt} & $m-1$ & $m$ \\[2pt]
		\hline 
		$I_m(n)$ & $\vp_m$\rule{0pt}{11pt} & $\tfrac{4}{m}$ & $\tfrac{2}{m+2}\vp_m$\\[2pt]                 
		\hline
	\end{tabular}
\caption{Initial Values of $I_m(n)$.}
\end{table}

Combining the initial values listed in Table 2 and \eqref{intRecur} allow us to give explicit formulae for $I_m(mn)$ and $I_m(mn-1)$. The formulae include products over all natural numbers in a congruence class up to a specified bound, for which the author has found no suitable notation.\footnote{One candidate notation is the \emph{multifactorial}, denoted $n!^{(k)} = n\cdot (n-k)!^{(k)}$ when $n\geq k$ and $1$ otherwise. However, the exponent introduces more clutter and the notation does not allow us to emphasize a fixed modulus as clearly.} Thus, we introduce the \emph{congruence Gamma function}, defined recursively by
\begin{align}
	\G_m(1,0) &= 1, \label{case1}\\
	\G_m(0,k) &= 1,	\label{case2}\\
	\G_m(n+1,k) &= (mn + k)\G_m(n,k). \label{case3}
\end{align}
For our purposes we will always assume $m$ and $n$ to be natural numbers and $0\leq k < m$ an integer. To summarize, $\G_m(n,k)$ is the product over the congruence class of $k \bmod m$ between $1$ and $mn$. When $m=1$ we have $\G_1(n,0) = \G(n) = (n-1)!$, where $\G(n)$ is the usual gamma function. The characteristic identity $\G(x+1) = x\G(x)$ and a simple inductive argument lead to the expression
\begin{align}
\label{gamma}
	\G_m(n,k) = \frac{\G(n+\tfrac{k}{m})}{m^n\G(\tfrac{k}{m})}.
\end{align}

The following lemma establishes recursive formulae for $I_m(mn)$ and $I_m(mn-1)$ in terms of the congruence Gamma function.

\begin{lemma}
\label{ratioLemma}
	Let $m$ and $n$ be natural numbers, then
	\begin{enumerate}
		\item $I_m(mn) = \frac{\G_{2m}(n,2)}{\G_{2m}(n,m+2)}\vp_m$,\\
		\item $I_m(mn-1) = \frac{\G_{2}(n,0)}{\G_{2}(n,1)}\frac{4}{m}$.
	\end{enumerate}
\end{lemma}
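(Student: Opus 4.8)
The plan is to prove both formulae by induction on $n$, in each case combining the recurrence \eqref{intRecur} of Lemma \ref{recurLemma} with the defining recurrence \eqref{case3} of the congruence Gamma function. The guiding observation is that advancing the index of $I_m$ by a single step of size $m$ corresponds exactly to advancing the relevant congruence Gamma functions by one unit in their first argument, provided we work at modulus $2m$ for part (1) and modulus $2$ for part (2).

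For part (1) I would take $n=1$ as the base case. Since $\G_{2m}(0,k)=1$ by \eqref{case2}, the recurrence \eqref{case3} gives $\G_{2m}(1,2)=2$ and $\G_{2m}(1,m+2)=m+2$, so the asserted formula reads $I_m(m)=\tfrac{2}{m+2}\vp_m$, in agreement with Table 2. For the inductive step I would apply \eqref{intRecur} at index $mn$ to obtain $I_m(m(n+1))=\tfrac{2mn+2}{2mn+m+2}\,I_m(mn)$. On the other side, \eqref{case3} at modulus $2m$ gives $\G_{2m}(n+1,2)=(2mn+2)\G_{2m}(n,2)$ and $\G_{2m}(n+1,m+2)=(2mn+m+2)\G_{2m}(n,m+2)$, so the Gamma quotient acquires precisely the factor $\tfrac{2mn+2}{2mn+m+2}$. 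Substituting the inductive hypothesis then closes the step.

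Part (2) runs through the identical template with different initial data and the modulus $2$. The base case $n=1$ reduces to $\G_2(1,0)=1$ by \eqref{case1} and $\G_2(1,1)=1$ from \eqref{case3} and \eqref{case2}, whence $I_m(m-1)=\tfrac{4}{m}$, again matching Table 2. For the inductive step I would apply \eqref{intRecur} at index $mn-1$ to get $I_m(m(n+1)-1)=\tfrac{2mn}{2mn+m}\,I_m(mn-1)$ and then simplify $\tfrac{2mn}{2mn+m}=\tfrac{2n}{2n+1}$. Since \eqref{case3} at modulus $2$ yields $\tfrac{\G_2(n+1,0)}{\G_2(n+1,1)}=\tfrac{2n}{2n+1}\,\tfrac{\G_2(n,0)}{\G_2(n,1)}$, the two factors coincide and the induction proceeds.

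There is no genuine obstacle once the moduli $2m$ and $2$ are identified correctly; both inductions are then mechanical. The only point demanding care is the index bookkeeping—matching the shift $n\mapsto n+m$ inside $I_m$ against the single increment of the first argument of $\G$—together with the small simplification $\tfrac{2mn}{2mn+m}=\tfrac{2n}{2n+1}$ in part (2), which is what makes the $m$-dependence cancel and leaves a clean modulus-$2$ Gamma quotient.
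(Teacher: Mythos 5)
Your proposal is correct and follows essentially the same route as the paper: induction on $n$, with the base case read off from Table 2 and the inductive step matching the factor $\tfrac{2(mn+1)}{2(mn+1)+m}$ (resp.\ $\tfrac{2mn}{2mn+m}=\tfrac{2n}{2n+1}$) from Lemma \ref{recurLemma} against the recurrence \eqref{case3} at modulus $2m$ (resp.\ $2$). Nothing is missing.
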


\begin{proof}
	Both identities follow by induction on $n$. The base case $n=1$ is a consequence of the initial values listed in Table 2.
	\[
	\begin{array}{rlcccl}
		&(1)\;\; I_m(m) &=& \frac{2}{m+2}\vp_m &=& \frac{\G_{2m}(1,2)}{\G_{2m}(1,m+2)}\vp_m,\\
		&(2)\;\; I_m(m - 1) &=& \frac{4}{m} &=& \frac{\G_2(1,0)}{\G_2(1,1)}\frac{4}{m}.\rule{0pt}{15pt}
	\end{array}
	\]
	Assuming the inductive hypothesis,
	\[
	\begin{array}{rlclcl}
		&(1)\;\; I_m(m(n + 1)) &\stackrel{\eqref{intRecur}}{=}& \frac{2(mn+1)}{2(mn+1) + m}I_m(mn) &=& \frac{(2mn + 2)\G_{2m}(n,2)}{(2mn + m + 2)\G_{2m}(n,m+2)}\vp_m\\
		&&&&\stackrel{\eqref{case3}}{=}& \frac{\G_{2m}(n+1,2)}{\G_{2m}(n+1,m+2)}\vp_m.\rule{0pt}{15pt}\\
		&(2)\;\; I_m(m(n + 1) - 1 ) &\stackrel{\eqref{intRecur}}{=}& \frac{2mn}{2mn+m}I_m(mn-1) &=& \frac{2n\G_{2}(n,0)}{(2n+1)\G_{2}(n,1)}\frac{4}{m}\\
		&&&&\stackrel{\eqref{case3}}{=}& \frac{\G_{2}(n+1,0)}{\G_{2}(n+1,1)}\frac{4}{m}.\rule{0pt}{15pt}
	\end{array}
	\]
	Therefore the formulae hold for all $n$ by the principle of induction.
\end{proof}

\begin{theorem}
For all natural $m$,
\begin{equation}
\label{main}
	\vp_m = \lim_{n\rightarrow\infty}{\frac{\G_{2}(n,0)\G_{2m}(n,m+2)}{\G_{2}(n,1)\G_{2m}(n,2)}\frac{4}{m}}.
\end{equation}
\end{theorem}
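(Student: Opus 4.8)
The plan is to compute the limit \eqref{limitEq} a second way, this time by restricting to a carefully chosen subsequence of indices and substituting the closed forms from Lemma \ref{ratioLemma}. The key observation is that the indices $mn-1$ and $mn$ are consecutive, and they are precisely the two cases for which Lemma \ref{ratioLemma} provides explicit values in terms of the congruence Gamma function. Taking their ratio therefore assembles exactly the product that appears in \eqref{main}, while \eqref{limitEq} forces that ratio to tend to $1$.

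First I would apply \eqref{limitEq} along the subsequence $N = mn-1$. Since the full sequence $I_m(N)/I_m(N+1)$ converges to $1$ as $N\to\infty$, every subsequence converges to $1$ as well; in particular,
\[
	\lim_{n\to\infty}{\frac{I_m(mn-1)}{I_m(mn)}} = 1.
\]
Next I would substitute the two formulas of Lemma \ref{ratioLemma} into this ratio. Using (1) and (2) of that lemma, the $\vp_m$ from (1) lands in the denominator and the factor $\tfrac{4}{m}$ from (2) in the numerator, giving
\[
	\frac{I_m(mn-1)}{I_m(mn)} = \frac{\G_2(n,0)\,\G_{2m}(n,m+2)}{\G_2(n,1)\,\G_{2m}(n,2)}\cdot\frac{4}{m\vp_m}.
\]
Passing to the limit and combining with the previous display yields
\[
	\lim_{n\to\infty}{\frac{\G_2(n,0)\,\G_{2m}(n,m+2)}{\G_2(n,1)\,\G_{2m}(n,2)}\cdot\frac{4}{m\vp_m}} = 1,
\]
and since $\vp_m$ is a constant independent of $n$, clearing it across the equality produces \eqref{main}.

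The argument is a direct assembly of the earlier results, so I do not expect a substantive obstacle; the only point requiring care is the alignment of indices. One must choose the subsequence so that the two quantities possessing closed forms, namely $I_m(mn-1)$ and $I_m(mn)$, are consecutive. This is what makes \eqref{limitEq} directly applicable and what causes the resulting product to match the statement exactly rather than some index-shifted variant. With that alignment in place, the remaining manipulations are purely formal, and no analytic estimates beyond the squeeze argument already recorded in \eqref{limitEq} are needed.
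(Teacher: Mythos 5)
Your proposal is correct and follows essentially the same route as the paper: substitute the two closed forms from Lemma \ref{ratioLemma} into the ratio $I_m(mn-1)/I_m(mn)$, invoke \eqref{limitEq} (along the subsequence $N=mn-1$) to see that this ratio tends to $1$, and clear the constant $\vp_m$. Your explicit remark about passing to a subsequence is a small clarification the paper leaves implicit, but the argument is otherwise identical.
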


\begin{proof}
	From Lemma \ref{ratioLemma} we have
	\[
		\frac{I_m(mn-1)}{I_m(mn)} = \frac{\G_{2}(n,0)\G_{2m}(n,m+2)}{\G_{2}(n,1)\G_{2m}(n,2)}\frac{4}{m\vp_m}.
	\]
	Taking the limit as $n\rightarrow\infty$ we have $\lim_{n\rightarrow\infty}{\tfrac{I_m(mn-1)}{I_m(mn)}} = 1$ by \eqref{limitEq}. Therefore,
	\begin{equation}
	\label{limit}
		1 = \lim_{n\rightarrow\infty}{\frac{I_m(mn-1)}{I_m(mn)}} = \lim_{n\rightarrow\infty}{\frac{\G_{2}(n,0)\G_{2m}(n,m+2)}{\G_{2}(n,1)\G_{2m}(n,2)}\frac{4}{m\vp_m}}.
	\end{equation}
	Multiplying \eqref{limit} by $\vp_m$ results in \eqref{main}.
\end{proof}

Unwrapping the definition of $\G_m(n,k)$ reveals the infinite product expression promised in the introduction,
\begin{equation*}
	\vp_m = \frac{2(m+2)}{m}\prod_{n=1}^\infty{\frac{2n(2mn+ m +2)}{(2n+1)(2mn+2)}}.
\end{equation*}

\section*{Acknowledegements}
The author would like to thank Keith Conrad for all his wonderful expositons, in particular \cite{stirling} which inspired this project; David A. Cox for his support and guidance; Daniel J. Velleman and Gregory Call for encouraging me to write this note.

\end{document}